\newtheorem{theorem}{Theorem}
\newtheorem{lemma}{Lemma}
\newtheorem{corollary}{Corollary}
\begin{document}
\author{L. E. Persson and G. Tephnadze}
\title[Fejér means]{A sharp boundedness result concerning some maximal
operators of Vilenkin-Fejér means }
\address{L.-E. Persson, Department of Engineering Sciences and Mathematics,
Luleä\ University of Technology, SE-971 87 Luleä, Sweden and Narvik
University College, P.O. Box 385, N-8505, Narvik, Norway.}
\email{larserik@ltu.se}
\address{G. Tephnadze, Department of Mathematics, Faculty of Exact and
Natural Sciences, Tbilisi State University, Chavchavadze str. 1, Tbilisi
0128, Georgia and Department of Engineering Sciences and Mathematics, Luleä\
University of Technology, SE-971 87 Luleä, Sweden.}
\email{giorgitephnadze@gmail.com}
\thanks{The research was supported by a Swedish Institute scholarship,
provided within the framework of the SI Baltic Sea Region Cooperation/Visby
Programme.}
\date{}
\maketitle

\begin{abstract}
In this paper we derive the maximal subspace of positive numbers, for which
the restricted maximal operator of Fejér means in this subspace is bounded
from the Hardy space $H_{p}$ to the space $L_{p}$ for all $0<p\leq 1/2.$
Moreover, we prove that the result is in a sense sharp.
\end{abstract}

\date{}

\textbf{2000 Mathematics Subject Classification.} 42C10, 42B25.

\textbf{Key words and phrases:} Vilenkin system, Vilenkin group, Fejér
means, martingale Hardy space, maximal operator, Vilenkin-Fourier series.

\section{Introduction}

\bigskip In the one-dimensional case the weak (1,1)-type inequality for the
maximal operator of Fejér means 
\begin{equation*}
\sigma ^{\ast }f:=\sup_{n\in \mathbb{N}}\left\vert \sigma _{n}f\right\vert
\end{equation*}%
can be found in Schipp \cite{Sc} for Walsh series and in Pál, Simon \cite{PS}
for bounded Vilenkin series. Fujji \cite{Fu} and Simon \cite{Si2} verified
that $\sigma ^{\ast }$ is bounded from $H_{1}$ to $L_{1}$. Weisz \cite{We2}
generalized this result and proved boundedness of $\sigma ^{\ast }$ from the
martingale space $H_{p}$ to the Lebesgue space $L_{p}$ for $p>1/2$. Simon 
\cite{Si1} gave a counterexample, which shows that boundedness does not hold
for $0<p<1/2.$ A counterexample for $p=1/2$ was given by Goginava \cite%
{GoAMH} (see also \cite{BGG} and \cite{BGG2}). Weisz \cite{We4} proved that
the maximal operator of the Fejér means $\sigma ^{\ast }$ is bounded from
the Hardy space $H_{1/2}$ to the space $weak-L_{1/2}$. The boundedness of
weighted maximal operators are considered in \cite{GoSzeged}, \cite{GNCz}, 
\cite{tep2}, \cite{tep3}. Weisz \cite{We3}(see also \cite{We1}) also proved
that the following theorem is true:

\textbf{Theorem W:} \textbf{(Weisz)} Let $p>0.$ Then the maximal operator 
\begin{equation*}
\sigma ^{\nabla ,\ast }f=\underset{n\in \mathbb{N}}{\sup }\left\vert \sigma
_{M_{n}}f\right\vert
\end{equation*}%
\ is bounded from the Hardy space $H_{p}$ to the space $L_{p}.$

The main aim of this paper is to generalize Theorem W and find the maximal
subspace of positive numbers, for which the restricted maximal operator of
Fejér means in this subspace is bounded from the Hardy space $H_{p}$ to the
space $L_{p}$ for all $0<p\leq 1/2.$ As applications, both some well-known
and new results are pointed out.

This paper is organized as follows: in order not to disturb our discussions
later on some definitions and notations are presented in Section 2. The main
results and some of its consequences can be found in Section 3. For the
proofs of the main results we need some auxiliary Lemmas, some of them are
new and of independent interest. These results are presented in Section 4.
The detailed proofs are given in Section 5.

\section{Definitions and Notations}

Denote by $\mathbb{N}_{+}$ the set of the positive integers, $\mathbb{N}:=%
\mathbb{N}_{+}\cup \{0\}.$ Let $m:=(m_{0},m_{1},\dots )$ be a sequence of
the positive integers not less than 2. Denote by $Z_{m_{n}}:=\{0,1,\ldots
,m_{n}-1\}$ the additive group of integers modulo $m_{n}$. Define the group $%
G_{m}$ as the complete direct product of the groups $Z_{m_{n}}$ with the
product of the discrete topologies of $Z_{m_{n}}`$s. In this paper we
discuss bounded Vilenkin groups, i.e. the case when $\sup_{n\in \mathbb{N}%
}m_{n}<\infty .$

The direct product $\mu $ of the measures $\mu _{n}\left( \{j\}\right)
:=1/m_{n},\ (j\in Z_{m_{n}})$ is the Haar measure on $G_{m}$ with $\mu
\left( G_{m}\right) =1.$

The elements of $G_{m}$ are represented by sequences 
\begin{equation*}
x:=\left( x_{0},x_{1},\ldots,x_{n},\ldots \right),\ \left( x_{n}\in
Z_{m_{n}}\right).
\end{equation*}

It is easy to give a base for the neighbourhood of $G_{m}:$

\begin{equation*}
I_{0}\left( x\right) :=G_{m},\ I_{n}(x):=\{y\in G_{m}\mid
y_{0}=x_{0},\ldots,y_{n-1}=x_{n-1}\}\,\,\left( x\in G_{m},\text{ }n\in 
\mathbb{N}\right).
\end{equation*}

Set $I_{n}:=I_{n}\left( 0\right) ,$ for $n\in \mathbb{N}_{+}$ \ and

\begin{equation*}
e_{n}:=\left( 0,\dots ,0,x_{n}=1,0,\dots \right) \in G_{m}\qquad \left( n\in 
\mathbb{N}\right) .
\end{equation*}

Denote 
\begin{equation*}
I_{N}^{k,l}:=\left\{ 
\begin{array}{l}
\text{ }I_{N}(0,\dots ,0,x_{k}\neq 0,0,\dots ,0,x_{l}\neq 0,x_{l+1,\dots ,%
\text{ }}x_{N-1\text{ }}),\text{ \ \ }k<l<N, \\ 
\text{ }I_{N}(0,\dots ,0,x_{k}\neq 0,0,\dots ,0),\text{ \qquad }l=N.%
\end{array}%
\text{ }\right.
\end{equation*}

It is easy to show that 
\begin{equation}
\overline{I_{N}}=\left( \overset{N-2}{\underset{i=0}{\bigcup }}\overset{N-1}{%
\underset{j=i+1}{\bigcup }}I_{N}^{i,j}\right) \bigcup \left( \underset{i=0}{%
\bigcup\limits^{N-1}}I_{N}^{i,N}\right) .  \label{2}
\end{equation}

\bigskip If we define the so-called generalized number system based on $m$
in the following way : 
\begin{equation*}
M_{0}:=1,\ M_{n+1}:=m_{n}M_{n}\ \ \ (n\in \mathbb{N}),
\end{equation*}%
then every $n\in \mathbb{N}$ can be uniquely expressed as $%
n=\sum_{k=0}^{\infty }n_{k}M_{k},$ where $n_{k}\in Z_{m_{k}}\ (k\in \mathbb{N%
}_{+})$ and only a finite number of $n_{k}`$s differ from zero. Let 
\begin{equation*}
\left\langle n\right\rangle :=\min \{j\in \mathbb{N}:n_{j}\neq 0\}\text{ \ \
and \ \ \ }\left\vert n\right\vert :=\max \{j\in \mathbb{N}:n_{j}\neq 0\},
\end{equation*}%
that is $M_{\left\vert n\right\vert }\leq n\leq M_{\left\vert n\right\vert
+1}.$ Set $d\left( n\right) =\left\vert n\right\vert -\left\langle
n\right\rangle ,$ \ for \ all \ \ $n\in \mathbb{N}.$

Next, we introduce on $G_{m}$ an orthonormal system, which is called the
Vilenkin system. At first, we define the complex-valued function $%
r_{k}\left( x\right) :G_{m}\rightarrow \mathbb{C},$ the generalized
Rademacher functions, by 
\begin{equation*}
r_{k}\left( x\right) :=\exp \left( 2\pi ix_{k}/m_{k}\right) ,\text{ }\left(
i^{2}=-1,x\in G_{m},\text{ }k\in \mathbb{N}\right) .
\end{equation*}

Now, define the Vilenkin system $\,\,\,\psi :=(\psi _{n}:n\in\mathbb{N})$ on 
$G_{m}$ as: 
\begin{equation*}
\psi _{n}(x):=\prod\limits_{k=0}^{\infty }r_{k}^{n_{k}}\left( x\right) \ \ \
\left( n\in\mathbb{N}\right).
\end{equation*}

Specifically, we call this system the Walsh-Paley system, when $m\equiv 2.$

The norms (or quasi-norms) of the spaces $L_{p}(G_{m})$ and $%
weak-L_{p}\left( G_{m}\right) $ $\ \left( 0<p<\infty \right) $ are
respectively defined by 
\begin{equation*}
\left\Vert f\right\Vert _{p}^{p}:=\int_{G_{m}}\left\vert f\right\vert
^{p}d\mu ,\text{ \ \ }\left\Vert f\right\Vert _{weak-L_{p}}^{p}:=\underset{%
\lambda >0}{\sup }\,\lambda ^{p}\mu \left( f>\lambda \right) <\infty .
\end{equation*}%
\qquad

The Vilenkin system is orthonormal and complete in $L_{2}\left( G_{m}\right) 
$ (see \cite{Vi}).

If $\ f\in L_{1}\left( G_{m}\right) $ we can define Fourier coefficients,
partial sums, Dirichlet kernels, Fejér means, Fejér kernels with respect to
the Vilenkin system in the usual manner: 
\begin{equation*}
\widehat{f}\left( k\right) :=\int_{G_{m}}f\overline{\psi }_{k}d\mu \text{%
\thinspace\ \ \ }\left( \text{ }k\in \mathbb{N}\right) ,
\end{equation*}%
\begin{eqnarray*}
S_{n}f &:&=\sum_{k=0}^{n-1}\widehat{f}\left( k\right) \psi _{k},\ \text{%
\qquad }D_{n}:=\sum_{k=0}^{n-1}\psi _{k\text{ }}\text{ \ \ \ \ \ \ }\left( 
\text{ }n\in \mathbb{N}_{+}\text{ }\right) , \\
\sigma _{n}f &:&=\frac{1}{n}\sum_{k=0}^{n-1}S_{k}f,\text{\qquad\ \ \ }K_{n}:=%
\frac{1}{n}\overset{n-1}{\underset{k=0}{\sum }}D_{k}\text{ \ \ }\left( \text{
}n\in \mathbb{N}_{+}\text{ }\right) .
\end{eqnarray*}

Recall that (see e.g. \cite{AVD}) 
\begin{equation}
\quad \hspace*{0in}D_{M_{n}}\left( x\right) =\left\{ 
\begin{array}{l}
\text{ }M_{n},\text{ \thinspace \thinspace if \thinspace \thinspace }x\in
I_{n}, \\ 
\text{ }0,\text{ \ \ \thinspace \thinspace \thinspace if \ \thinspace
\thinspace }x\notin I_{n},%
\end{array}%
\right.  \label{3}
\end{equation}%
and \ 
\begin{equation}
D_{s_{n}M_{n}}=D_{s_{n}M_{n}}\sum_{k=0}^{s_{n}-1}\psi
_{kM_{n}}=D_{M_{n}}\sum_{k=0}^{s_{n}-1}r_{n}^{k},  \label{9dn}
\end{equation}%
where $n\in \mathbb{N}$ and $1\leq s_{n}\leq m_{n}-1.$

The $\sigma $-algebra generated by the intervals $\left\{ I_{n}\left(
x\right) :x\in G_{m}\right\} $ will be denoted by $\digamma _{n}$ $\left(
n\in \mathbb{N}\right) .$ Denote by $f=\left( f^{\left( n\right) },n\in 
\mathbb{N}\right) $ a martingale with respect to $\digamma _{n}$ $\left(
n\in \mathbb{N}\right) $ (for details see e.g. \cite{We1}). The maximal
function of a martingale $f$ is defined by \qquad 
\begin{equation*}
f^{\ast }=\sup_{n\in \mathbb{N}}\left\vert f^{\left( n\right) }\right\vert .
\end{equation*}

In the case $f\in L_{1}(G_{m}),$ the maximal functions are also be given by 
\begin{equation*}
f^{\ast }\left( x\right) =\sup_{n\in \mathbb{N}}\frac{1}{\left\vert
I_{n}\left( x\right) \right\vert }\left\vert \int_{I_{n}\left( x\right)
}f\left( u\right) \mu \left( u\right) \right\vert .
\end{equation*}

For $0<p<\infty $ the Hardy martingale spaces $H_{p}\left( G_{m}\right) $
consist of all martingales $f$, for which 
\begin{equation*}
\left\Vert f\right\Vert _{H_{p}}:=\left\Vert f^{\ast }\right\Vert
_{p}<\infty .
\end{equation*}

If $f\in L_{1}(G_{m}),$ then it is easy to show that the sequence $\left(
S_{M_{n}}\left( f\right) :n\in \mathbb{N}\right) $ is a martingale. If $%
f=\left( f^{\left( n\right) },n\in \mathbb{N}\right) $ is martingale, then
the Vilenkin-Fourier coefficients must be defined in a slightly different
manner: $\qquad \qquad $ 
\begin{equation*}
\widehat{f}\left( i\right) :=\lim_{k\rightarrow \infty
}\int_{G_{m}}f^{\left( k\right) }\left( x\right) \overline{\psi }_{i}\left(
x\right) d\mu \left( x\right) .
\end{equation*}

The Vilenkin-Fourier coefficients of $f\in L_{1}\left( G_{m}\right) $ are
the same as those of the martingale $\left( S_{M_{n}}\left( f\right) :n\in 
\mathbb{N}\right) $ obtained from $f$.

A bounded measurable function $a$ is said to be a p-atom if there exists an
interval $I$, such that%
\begin{equation*}
\int_{I}ad\mu =0,\text{ \ \ }\left\Vert a\right\Vert _{\infty }\leq \mu
\left( I\right) ^{-1/p},\text{ \ \ supp}\left( a\right) \subset I.\qquad
\end{equation*}%
\qquad

\section{The Main Result and applications}

Our main result reads:

\begin{theorem}
\label{theorem0fejermax}a)Let $0<p\leq 1/2$ and $\left\{ n_{k}:k\geq
0\right\} $ be a sequence of positive numbers, such that%
\begin{equation*}
\sup_{k}\rho \left( n_{k}\right) \leq c<\infty .
\end{equation*}%
Then the maximal operator 
\begin{equation*}
\widetilde{\sigma }^{\ast ,\nabla }f=\underset{k\in \mathbb{N}}{\sup }%
\left\vert \sigma _{n_{k}}f\right\vert
\end{equation*}%
is bounded from the Hardy space $H_{p}$ to the space $L_{p}.$

The statement in a) is sharp in the following sense:

b) Let $0<p<1/2$ and $\left\{ n_{k}:k\geq 0\right\} $ be a sequence of
positive numbers, such that%
\begin{equation}
\sup_{k}\rho \left( n_{k}\right) =\infty .  \label{10}
\end{equation}%
Then there exists a martingale $f\in H_{p}$ such that 
\begin{equation*}
\underset{k\in \mathbb{N}}{\sup }\left\Vert \sigma _{n_{k}}f\right\Vert
_{p}=\infty .
\end{equation*}
\end{theorem}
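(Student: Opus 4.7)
For part (a) I would use the atomic decomposition of $H_{p}$: it suffices to prove the uniform bound $\left\Vert \widetilde{\sigma}^{\ast,\nabla} a\right\Vert_{p}\le C$ for an arbitrary $p$-atom $a$ with $\mathrm{supp}(a)\subset I_{N}$. The vanishing-integral property kills $\sigma_{n_{k}}a$ for $\left\vert n_{k}\right\vert<N$, so only indices with $\left\vert n_{k}\right\vert\ge N$ remain to be controlled. The contribution from $I_{N}$ itself is handled by the $L_{2}$-bound on $\widetilde{\sigma}^{\ast,\nabla}$ combined with $\Vert a\Vert_{2}\le \mu(I_{N})^{1/2-1/p}$ and H\"older. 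The essential work is on the complement $\overline{I_{N}}$, which the decomposition (\ref{2}) splits into pieces $I_{N}^{i,j}$; matters are thus reduced to estimating and summing $\int_{I_{N}^{i,j}}\vert\sigma_{n_{k}}a\vert^{p}\,d\mu$ in $(i,j)$.

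The decisive ingredient is a pointwise estimate for $K_{n_{k}}(x-y)$ when $y\in I_{N}$ and $x\in I_{N}^{i,j}$ that exploits the hypothesis $\rho(n_{k})\le c$. Writing $n_{k}=\sum_{\ell=\langle n_{k}\rangle}^{\vert n_{k}\vert}(n_{k})_{\ell}M_{\ell}$, applying (\ref{9dn}) together with Abel summation, and using that the nonzero digits of $n_{k}$ lie in an interval of length $\le c+1$, one decomposes $n_{k}K_{n_{k}}$ into a bounded (in $k$) number of blocks essentially of the form $M_{\ell}D_{M_{\ell}}$ times uniformly bounded factors. Combined with (\ref{3}) this yields a pointwise bound of size $M_{\langle n_{k}\rangle}$ on the relevant diagonal pieces and vanishing elsewhere; inserting this and performing a geometric summation over $(i,j)$ closes part (a).

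For part (b), I would construct a counterexample martingale $f=\sum_{s=1}^{\infty}\lambda_{s}a_{s}$. Using (\ref{10}), choose a subsequence $(n_{k_{s}})$ with $\rho(n_{k_{s}})\to\infty$, and set $\alpha_{s}:=\langle n_{k_{s}}\rangle$ chosen so sparsely that $\alpha_{s+1}\gg\vert n_{k_{s}}\vert$. Define each $a_{s}$ as a normalised $p$-atom proportional to $D_{M_{\alpha_{s}+1}}-D_{M_{\alpha_{s}}}$ supported in $I_{\alpha_{s}}$, and pick $\lambda_{s}$ with $\sum\vert\lambda_{s}\vert^{p}<\infty$, which secures $f\in H_{p}$ by atomic characterisation. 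A direct computation using (\ref{9dn}) shows that on a suitable $I_{\vert n_{k_{s}}\vert}^{\langle n_{k_{s}}\rangle,\vert n_{k_{s}}\vert}$-type set, $\sigma_{n_{k_{s}}}a_{s}$ has magnitude $\asymp m_{\langle n_{k_{s}}\rangle}^{-1}M_{\vert n_{k_{s}}\vert}$ on a set of measure $\asymp M_{\vert n_{k_{s}}\vert}^{-1}$, producing
\begin{equation*}
\left\Vert \sigma_{n_{k_{s}}}f\right\Vert_{p}\gtrsim \vert\lambda_{s}\vert\, M_{\rho(n_{k_{s}})}^{1/p-2},
\end{equation*}
which blows up because $1/p-2>0$ for $p<1/2$ and $\rho(n_{k_{s}})\to\infty$, contradicting boundedness.

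The main obstacle in (a) is the sharp kernel decomposition: the trivial bound $\vert K_{n}\vert\lesssim M_{\vert n\vert}$ is far too crude, and one must fully exploit (\ref{9dn}) together with the constraint $\vert n_{k}\vert-\langle n_{k}\rangle\le c$ to replace $M_{\vert n_{k}\vert}$ by $M_{\langle n_{k}\rangle}$ on the correct sets. In (b), the delicate point is the simultaneous calibration of the $\lambda_{s}$ and the sparse gaps $\alpha_{s+1}-\vert n_{k_{s}}\vert$, so that the atomic sum converges in $H_{p}$ while the lower bound above still diverges; the strict inequality $p<1/2$ enters essentially exactly at this step via the exponent $1/p-2$.
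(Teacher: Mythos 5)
Your part (a) skeleton (atomic decomposition, reduction to $\overline{I_{N}}$ via (\ref{2}), splitting into the pieces $I_{N}^{i,j}$, and using $\rho(n_{k})\leq c$ to trade $M_{N}$ for $M_{\left\langle n_{k}\right\rangle }$ in the final summation) is indeed the paper's route, but the kernel estimate you designate as the decisive ingredient is false as stated. The decomposition of $n_{k}K_{n_{k}}$ coming from (\ref{9dn}) and Abel summation (Lemmas \ref{lemma7} and \ref{lemma8}, after \cite{bt}) produces blocks of the form $s_{k}M_{n_{k}}K_{s_{k}M_{n_{k}}}$ and $n^{(k)}D_{s_{k}M_{n_{k}}}$ --- i.e.\ Fej\'er kernels, not only Dirichlet kernels --- and, unlike $D_{M_{l}}$, the kernel $K_{M_{l}}$ is \emph{not} supported in $I_{l}$: by G\'at's Lemma \ref{lemma3} it equals $M_{t}/\left( 1-r_{t}(x)\right) $ on the far-away sets $\left\{ x\in I_{t}\backslash I_{t+1}:x-x_{t}e_{t}\in I_{l}\right\} $ for every $t<l$, with size $\sim M_{t}$ there. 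So your claim that the kernel is ``of size $M_{\left\langle n_{k}\right\rangle }$ on the relevant diagonal pieces and vanishing elsewhere'' mischaracterizes exactly the contributions that dominate. What is true, and what the actual proof needs, is the pair of facts: $\sigma _{n_{k}}a$ vanishes on $I_{N}^{i,j}$ only when $j<\left\langle n_{k}\right\rangle $ (via (\ref{10k}) combined with Lemma \ref{lemma3}), while for $j\geq \left\langle n_{k}\right\rangle $ one has the integrated two-index estimate $\int_{I_{N}}\left\vert K_{n_{k}}\left( x-t\right) \right\vert d\mu \left( t\right) \leq cM_{i}M_{j}/M_{N}^{2}$ of Lemma \ref{lemma5b}; it is the product $M_{i}M_{j}$ here, summed over $i<j$ with the cutoff $j\geq \left\langle n_{k}\right\rangle $, that produces the critical factor $M_{N}^{1-2p}/M_{\left\langle n_{k}\right\rangle }^{1-2p}\leq c\lambda ^{\rho \left( n_{k}\right) \left( 1-p\right) }$. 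Your sketch, taken literally, would miss these off-diagonal contributions entirely.

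Part (b) has a more serious defect: your counterexample is built at the wrong scale and the single-atom lower bound is incorrect. You place $a_{s}\propto D_{M_{\alpha _{s}+1}}-D_{M_{\alpha _{s}}}$ at the \emph{bottom} scale $\alpha _{s}=\left\langle n_{k_{s}}\right\rangle $, so the spectrum of $a_{s}$ is $[M_{\alpha _{s}},M_{\alpha _{s}+1})$; since $n_{k_{s}}\geq M_{\left\vert n_{k_{s}}\right\vert }\geq M_{\alpha _{s}+1}$, one has $S_{j}a_{s}=a_{s}$ for all $j\geq M_{\alpha _{s}+1}$ and $S_{j}a_{s}=0$ for $j\leq M_{\alpha _{s}}$, hence $\sigma _{n_{k_{s}}}a_{s}=a_{s}+O\left( M_{\alpha _{s}+1}M_{\alpha _{s}}^{1/p}/n_{k_{s}}\right) $ and $\left\Vert \sigma _{n_{k_{s}}}a_{s}\right\Vert _{p}\lesssim \left\Vert a_{s}\right\Vert _{p}\lesssim 1$: no blow-up at all. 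Moreover your claimed magnitude $\asymp m_{\left\langle n\right\rangle }^{-1}M_{\left\vert n\right\vert }$ on a set of measure $\asymp M_{\left\vert n\right\vert }^{-1}$ would only give $\left\Vert \cdot \right\Vert _{p}\asymp M_{\left\vert n\right\vert }^{1-1/p}\rightarrow 0$, not the asserted $\gtrsim M_{\rho (n)}^{1/p-2}$, so the numbers are internally inconsistent. The correct mechanism (the paper's) is dual to yours: the atoms sit at the \emph{top} scale, $a_{k}\propto M_{\left\vert \alpha _{k}\right\vert }^{1/p-1}\left( D_{M_{\left\vert \alpha _{k}\right\vert +1}}-D_{M_{\left\vert \alpha _{k}\right\vert }}\right) $, so that $\alpha _{k}$ cuts the spectral block $[M_{\left\vert \alpha _{k}\right\vert },M_{\left\vert \alpha _{k}\right\vert +1})$ strictly inside; then $\sigma _{\alpha _{k}}f$ contains the genuinely partial term $II_{2}\propto \psi _{M_{\left\vert \alpha _{k}\right\vert }}\left( \alpha _{k}-M_{\left\vert \alpha _{k}\right\vert }\right) K_{\alpha _{k}-M_{\left\vert \alpha _{k}\right\vert }}$, and since $\left\langle \alpha _{k}-M_{\left\vert \alpha _{k}\right\vert }\right\rangle =\left\langle \alpha _{k}\right\rangle $, the lower bound (\ref{9k}) of Lemma \ref{lemma8} gives $\left\vert II_{2}\right\vert \gtrsim M_{\left\vert \alpha _{k}\right\vert }^{1/2p-1}M_{\left\langle \alpha _{k}\right\rangle }^{\left( 1/p+2\right) /2}$ on a set of measure $\sim M_{\left\langle \alpha _{k}\right\rangle }^{-1}$ at the bottom scale, whence $\left\Vert \sigma _{\alpha _{k}}f\right\Vert _{weak-L_{p}}^{p}\gtrsim \left( M_{\left\vert \alpha _{k}\right\vert }/M_{\left\langle \alpha _{k}\right\rangle }\right) ^{1/2-p}\rightarrow \infty $, after choosing weights $\lambda _{k}=\lambda \left( M_{\left\langle \alpha _{k}\right\rangle }/M_{\left\vert \alpha _{k}\right\vert }\right) ^{\left( 1/p-2\right) /2}$ and a subsequence making $\sum_{k}\left\vert \lambda _{k}\right\vert ^{p}<\infty $, and absorbing the remaining pieces $I$ and $II_{1}$ via part (a) and the $S_{M_{n}}$ bounds. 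No calibration of sparse gaps can rescue your version, because the quantity you propose to make large is in fact uniformly bounded.
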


As a first application we obtain the previous mentioned result by Weisz \cite%
{We1}, \cite{We3} (Theorem W).

\begin{corollary}
Let $p>0$ and $f\in H_{p}.$ Then the maximal operator $\sigma ^{\nabla ,\ast
}f$ \ is bounded from the Hardy space $H_{p}$ to the space $L_{p}.$
\end{corollary}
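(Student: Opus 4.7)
The plan is to split the corollary into two ranges of $p$ and handle each with a single short reduction.

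For the hard range $0<p\le 1/2$, I would apply Theorem \ref{theorem0fejermax}a) with the particular subsequence $n_{k}:=M_{k}$. With this choice the restricted maximal operator reduces precisely to Weisz's operator:
\begin{equation*}
\widetilde{\sigma}^{\ast,\nabla}f=\sup_{k\in\mathbb{N}}\left\vert\sigma_{M_{k}}f\right\vert=\sigma^{\nabla,\ast}f,
\end{equation*}
so it suffices to verify the hypothesis $\sup_{k}\rho(M_{k})\le c<\infty$. Since $M_{k}$ has a single nonzero Vilenkin digit, located at position $k$, one has $\langle M_{k}\rangle=|M_{k}|=k$ and in particular $d(M_{k})=0$. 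The parameter $\rho$ is built from exactly this kind of ``spread'' data, and on such degenerate integers it takes its minimal bounded value, so the hypothesis of part a) is automatically met. Theorem \ref{theorem0fejermax}a) then delivers the desired bound $\|\sigma^{\nabla,\ast}f\|_{p}\le C_{p}\|f\|_{H_{p}}$.

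For the complementary range $p>1/2$, no appeal to the new theorem is necessary. As recalled in the introduction, Weisz proved that the full Fej\'er maximal operator $\sigma^{\ast}$ is bounded from $H_{p}$ to $L_{p}$ for every $p>1/2$, and the trivial pointwise comparison $\sigma^{\nabla,\ast}f\le\sigma^{\ast}f$ transfers this bound immediately to the restricted operator.

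I expect no technical obstacle in carrying this out. The only point that actually requires a line of verification is that the ``spread'' parameter $\rho$ appearing in Theorem \ref{theorem0fejermax}a) takes a uniformly bounded value on the subsequence $(M_{k})$; this follows at once from the definition of $\rho$ together with the observation that each $M_{k}$ has exactly one nonzero digit, making it the ``least spread out'' element of $\mathbb{N}$, so that the hypothesis of the main theorem is sharply saturated by Weisz's kernel indices.
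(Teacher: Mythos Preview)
Your proposal is correct and matches the paper's intended derivation: the paper states Corollary~1 without proof as an immediate application, and the only way to read it is exactly your split---apply Theorem~\ref{theorem0fejermax}a) with $n_{k}=M_{k}$ (so $\langle M_{k}\rangle=|M_{k}|=k$ and $\rho(M_{k})=d(M_{k})=0$) for $0<p\le 1/2$, and invoke Weisz's boundedness of the full operator $\sigma^{\ast}$ together with $\sigma^{\nabla,\ast}f\le\sigma^{\ast}f$ for $p>1/2$. The one cosmetic point is that you hedge on the definition of $\rho$; in this paper $\rho(n)=|n|-\langle n\rangle=d(n)$, so you may state $\rho(M_{k})=0$ outright.
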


Moreover, we get the following new information:

\begin{corollary}
Let $0<p<1/2,$ $f\in H_{p}$ and $\left\{ n_{k}:k\geq 0\right\} $ be any
sequence of positive numbers. Then the maximal operator 
\begin{equation*}
\widetilde{\sigma }^{\ast ,\nabla }f=\underset{k\in \mathbb{N}}{\sup }%
\left\vert \sigma _{n_{k}}f\right\vert
\end{equation*}%
is bounded from the Hardy space $H_{p}$ to the space $L_{p}$ if and only if 
\begin{equation*}
\sup_{k}\rho \left( n_{k}\right) <\infty .
\end{equation*}
\end{corollary}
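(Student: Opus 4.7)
The plan is to derive this corollary as an essentially immediate consequence of Theorem~\ref{theorem0fejermax}. Since the corollary is an ``if and only if'' statement that simply combines the two halves of that theorem, the work consists mainly of verifying that each direction really falls under the hypothesis of the corresponding part of the theorem and then drawing the simple quantitative comparison between $\|\sigma_{n_k}f\|_p$ and $\|\widetilde{\sigma}^{\ast,\nabla}f\|_p$.

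For the sufficiency direction, I would assume $\sup_k \rho(n_k) < \infty$ and invoke part a) of Theorem~\ref{theorem0fejermax} verbatim, since the hypothesis of that part is precisely the condition $\sup_k \rho(n_k) \le c < \infty$. This gives the $H_p \to L_p$ boundedness of $\widetilde{\sigma}^{\ast,\nabla}$ for all $0 < p \le 1/2$, which in particular covers the range $0 < p < 1/2$ considered here.

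For the necessity direction, I would argue by contrapositive: suppose $\sup_k \rho(n_k) = \infty$. Then the hypothesis of part b) of Theorem~\ref{theorem0fejermax} is met, so there exists a martingale $f \in H_p$ with $\sup_k \|\sigma_{n_k} f\|_p = \infty$. The key elementary observation is the trivial pointwise bound $|\sigma_{n_k} f(x)| \le \widetilde{\sigma}^{\ast,\nabla}f(x)$ for every $k$ and $x$, which yields $\|\sigma_{n_k} f\|_p \le \|\widetilde{\sigma}^{\ast,\nabla} f\|_p$ for each $k$. Taking the supremum over $k$ forces $\|\widetilde{\sigma}^{\ast,\nabla}f\|_p = \infty$, so $\widetilde{\sigma}^{\ast,\nabla}$ cannot be bounded from $H_p$ to $L_p$.

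Since all the analytical content has already been done in Theorem~\ref{theorem0fejermax}, there is no real obstacle here. The only subtlety to be aware of is the distinction between the $p$-quasinorm inequality $\|\sigma_{n_k}f\|_p \le \|\widetilde{\sigma}^{\ast,\nabla} f\|_p$, which is valid for $0 < p < 1/2$ since these are monotonicity-type estimates of the underlying integrals and hold regardless of whether $\|\cdot\|_p$ is a genuine norm. Thus the short proof simply states both directions and cites Theorem~\ref{theorem0fejermax}.
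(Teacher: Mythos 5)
Your proposal is correct and matches the paper's intent exactly: the paper gives no separate proof of this corollary, presenting it as an immediate consequence of Theorem \ref{theorem0fejermax}, with part a) giving sufficiency and part b) plus the trivial pointwise domination $\left\vert \sigma_{n_{k}}f\right\vert \leq \widetilde{\sigma}^{\ast,\nabla}f$ giving necessity. Your remark that this domination argument is insensitive to $\left\Vert \cdot \right\Vert_{p}$ being only a quasinorm is accurate and covers the one point worth checking.
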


\begin{corollary}
Let $0<p<1/2,$ $f\in H_{p}$ and $\left\{ n_{k}:k\geq 0\right\} $ be any
sequence of positive numbers. Then $\sigma _{n_{k}}f$ are uniformly bounded
from the Hardy space $H_{p}$ to the space $L_{p}$ if and only if 
\begin{equation*}
\sup_{k}\rho \left( n_{k}\right) <\infty .
\end{equation*}
\end{corollary}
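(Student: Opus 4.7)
The plan is to deduce this corollary directly from Theorem \ref{theorem0fejermax}, since the only gap between it and the preceding Corollary 2 is the translation from maximal operator boundedness to uniform boundedness of the individual operators $\sigma_{n_k}$. Both directions are immediate consequences of the two parts of the theorem.

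For the sufficiency direction (``if''), I would assume $\sup_k \rho(n_k) < \infty$ and invoke Theorem \ref{theorem0fejermax}(a) to conclude that the maximal operator $\widetilde{\sigma}^{\ast,\nabla}$ is bounded from $H_p$ into $L_p$. Then, using the trivial pointwise domination
\begin{equation*}
\left\vert \sigma_{n_k} f(x) \right\vert \leq \widetilde{\sigma}^{\ast,\nabla} f(x) \qquad \text{for every } k \in \mathbb{N},
\end{equation*}
monotonicity of the $L_p$ quasi-norm gives
\begin{equation*}
\left\Vert \sigma_{n_k} f \right\Vert_p \leq \bigl\Vert \widetilde{\sigma}^{\ast,\nabla} f \bigr\Vert_p \leq c_p \left\Vert f \right\Vert_{H_p},
\end{equation*}
with the constant $c_p$ independent of $k$. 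This is precisely the desired uniform boundedness.

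For the necessity direction (``only if''), I would argue by contraposition. Assume $\sup_k \rho(n_k) = \infty$ so that the hypothesis \eqref{10} of Theorem \ref{theorem0fejermax}(b) is satisfied. The theorem then supplies a single martingale $f \in H_p$ such that $\sup_{k} \left\Vert \sigma_{n_k} f \right\Vert_p = \infty$. If the family $\{\sigma_{n_k}\}$ were uniformly bounded from $H_p$ to $L_p$, with common bound $A$, we would have $\left\Vert \sigma_{n_k} f \right\Vert_p \leq A \left\Vert f \right\Vert_{H_p}$ for every $k$, which contradicts the unboundedness produced by Theorem \ref{theorem0fejermax}(b). Hence uniform boundedness forces $\sup_k \rho(n_k) < \infty$.

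There is no real obstacle here; the corollary is essentially a formal restatement of Theorem \ref{theorem0fejermax}. The only point to be careful about is checking that the same witness $f$ produced in part (b) of the theorem simultaneously serves as a counterexample to uniform operator-norm bounds (rather than merely demonstrating non-boundedness of the supremum), but this is automatic because the $H_p$-norm of $f$ is a fixed finite number while $\sup_k \left\Vert \sigma_{n_k} f \right\Vert_p = \infty$.
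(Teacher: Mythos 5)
Your proposal is correct and matches the paper's intent exactly: the paper states this corollary without separate proof, treating it (as you do) as an immediate formal consequence of Theorem \ref{theorem0fejermax}, with sufficiency following from part a) via the pointwise bound $\vert\sigma_{n_k}f\vert\leq\widetilde{\sigma}^{\ast,\nabla}f$ and necessity following from the counterexample martingale of part b). Both directions are handled as the paper envisions, so there is nothing to add.
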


\section{AUXILIARY LEMMAS}

For the proof of Theorem 1 we need the following Lemmas:

\begin{lemma}[see e.g. \protect\cite{We3}]
\label{lemma1} A martingale $f=\left( f^{\left( n\right) },n\in \mathbb{N}%
\right) $ is in $H_{p}\left( 0<p\leq 1\right) $ if and only if there exist a
sequence $\left( a_{k},k\in \mathbb{N}\right) $ of p-atoms and a sequence $%
\left( \mu _{k},k\in \mathbb{N}\right) $ of real numbers such that for every 
$n\in \mathbb{N}:$%
\begin{equation}
\qquad \sum_{k=0}^{\infty }\mu _{k}S_{M_{n}}a_{k}=f^{\left( n\right) }
\label{1}
\end{equation}%
and%
\begin{equation*}
\qquad \sum_{k=0}^{\infty }\left\vert \mu _{k}\right\vert ^{p}<\infty .
\end{equation*}%
Moreover, $\left\Vert f\right\Vert _{H_{p}}\backsim \inf \left(
\sum_{k=0}^{\infty }\left\vert \mu _{k}\right\vert ^{p}\right) ^{1/p},$
where the infimum is taken over all decomposition of $f$ of the form (\ref{1}%
).
\end{lemma}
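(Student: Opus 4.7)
The plan is to prove both implications, with sufficiency following from a direct calculation using $p$-subadditivity and basic properties of $p$-atoms, and necessity requiring a martingale stopping-time construction.

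For sufficiency, suppose $f^{(n)} = \sum_k \mu_k S_{M_n} a_k$ with $\sum |\mu_k|^p < \infty$. The key observation is that for a $p$-atom $a$ supported in $I = I_N(x)$ (so $\mu(I) = 1/M_N$), the mean-zero condition $\int_I a \, d\mu = 0$ combined with (\ref{3}) forces $S_{M_n} a \equiv 0$ for $n < N$, while for $n \geq N$, $S_{M_n} a$ is supported in $I$ by the nesting of the $I_n(x)$-atoms. Hence the associated maximal function $a^{\ast} := \sup_n |S_{M_n} a|$ satisfies $\mathrm{supp}(a^{\ast}) \subset I$, and since conditional expectations contract the $L^{\infty}$ norm,
$$\|a^{\ast}\|_p^p \leq \mu(I) \, \|a\|_{\infty}^p \leq \mu(I) \cdot \mu(I)^{-1} = 1.$$
Then $p$-subadditivity of $t \mapsto t^p$ gives $(f^{\ast})^p \leq \sum_k |\mu_k|^p (a_k^{\ast})^p$, and integrating yields $\|f\|_{H_p}^p \leq \sum_k |\mu_k|^p$; taking the infimum gives one direction of the norm equivalence.

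For necessity, given $f \in H_p$, I would use a standard Burkholder--Davis--Gundy-type stopping time construction. Define
$$\nu_k := \inf\{n \in \mathbb{N} : |f^{(n)}| > 2^k\} \qquad (\inf \emptyset := \infty),$$
and decompose $f^{(n)} = \sum_k \bigl(f^{(n \wedge \nu_{k+1})} - f^{(n \wedge \nu_k)}\bigr)$, a convergent telescoping sum. The level set $A_k := \{\nu_k < \infty\} = \{f^{\ast} > 2^k\}$, being a union of $\digamma_n$-atoms for various $n$, admits a disjoint decomposition $A_k = \bigsqcup_j I^{k,j}$ into maximal Vilenkin intervals. Restricting the $k$-th stopped difference to each $I^{k,j}$ and normalizing by
$$\mu_{k,j} := 3 \cdot 2^k \, \mu(I^{k,j})^{1/p}$$
yields $p$-atoms $a_{k,j}$: the mean-zero property on $I^{k,j}$ follows from applying the martingale property at the stopping epochs, the support condition is automatic, and the $L^{\infty}$ bound is enforced by the stopping rule. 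Summing in $k$ and $j$,
$$\sum_{k,j} |\mu_{k,j}|^p \lesssim \sum_k 2^{kp} \mu(A_k) \lesssim \int (f^{\ast})^p \, d\mu = \|f\|_{H_p}^p,$$
by a standard layer-cake argument.

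I expect the main technical obstacle to be the verification of the $L^{\infty}$ bound on the stopped pieces $f^{\nu_{k+1}} - f^{\nu_k}$ on each interval $I^{k,j}$. This rests on controlling the size of the jump at a stopping epoch, which in the bounded Vilenkin setting ($\sup_n m_n < \infty$) is dominated by a constant multiple of $2^k$. Once this pointwise bound is in place, the atom axioms and the $\ell_p$-summability of the coefficients combine to give the reverse norm inequality, completing the equivalence.
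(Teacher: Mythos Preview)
The paper does not prove this lemma at all: it is quoted as a known result from Weisz \cite{We3} and used as a black box. So there is no ``paper's own proof'' to compare against; your sketch is essentially a reconstruction of the standard argument one finds in the cited reference.

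That said, your outline is correct and follows the classical route. The sufficiency argument via $p$-subadditivity and the observation that $S_{M_n}a$ vanishes for $n\le N$ when $a$ is supported on an interval of level $N$ is exactly right. For necessity, the stopping-time construction $\nu_k=\inf\{n:|f^{(n)}|>2^k\}$ and the decomposition of the level sets $\{f^*>2^k\}$ into maximal Vilenkin intervals is the standard Weisz argument. One small point: the normalizing constant $3\cdot 2^k$ is tuned to the dyadic (Walsh) case; in the bounded Vilenkin setting you will need a constant depending on $\lambda=\sup_n m_n$, since the bound $|f^{(\nu_k)}|\le C\,2^k$ on a maximal interval comes from the regularity of the filtration (each $\digamma_{n-1}$-atom splits into at most $m_{n-1}\le\lambda$ pieces, which controls the jump at the stopping epoch). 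You correctly flag this as the one nontrivial verification, and it does go through precisely because $\sup_n m_n<\infty$.
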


\begin{lemma}[see e.g. \protect\cite{We3}]
\label{lemma2} Suppose that an operator $T$ is $\sigma $-linear and for some 
$0<p\leq 1$%
\begin{equation*}
\int\limits_{\overset{-}{I}}\left\vert Ta\right\vert ^{p}d\mu \leq
c_{p}<\infty ,
\end{equation*}%
for every $p$-atom $a$, where $I$ denote the support of the atom. If $T$ is
bounded from $L_{\infty \text{ }}$ to $L_{\infty },$ then 
\begin{equation*}
\left\Vert Tf\right\Vert _{p}\leq c_{p}\left\Vert f\right\Vert _{H_{p}}.
\end{equation*}
\end{lemma}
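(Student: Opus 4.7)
The plan is to use the atomic decomposition provided by Lemma \ref{lemma1} to reduce the boundedness of $T$ on $H_p$ to a uniform estimate on atoms, and then exploit the two hypotheses (the $L_p$ bound over the complement of the support, and the $L_\infty \to L_\infty$ bound) to produce such a uniform estimate.

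First I would take $f \in H_p$ and apply Lemma \ref{lemma1} to write $f = \sum_{k=0}^{\infty} \mu_k a_k$ with $\sum_k |\mu_k|^p \lesssim \|f\|_{H_p}^p$, where each $a_k$ is a $p$-atom supported in some interval $I_k$. Since $T$ is $\sigma$-linear, $Tf = \sum_k \mu_k T a_k$, and since $0 < p \leq 1$ the $p$-subadditivity of $t \mapsto t^p$ yields
\begin{equation*}
\int_{G_m} |Tf|^p \, d\mu \leq \sum_{k=0}^{\infty} |\mu_k|^p \int_{G_m} |T a_k|^p \, d\mu.
\end{equation*}
Thus it suffices to prove a uniform bound $\int_{G_m} |Ta|^p \, d\mu \leq C_p$ for every $p$-atom $a$ with support $I$.

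Next I would split $\int_{G_m} |Ta|^p \, d\mu = \int_I |Ta|^p \, d\mu + \int_{\overline{I}} |Ta|^p \, d\mu$. The second piece is bounded by $c_p$ directly from the hypothesis. For the first piece I would use the atomic bound $\|a\|_\infty \leq \mu(I)^{-1/p}$ together with the assumed $L_\infty \to L_\infty$ boundedness of $T$ (with constant $C$), so that $\|Ta\|_\infty \leq C \mu(I)^{-1/p}$, whence
\begin{equation*}
\int_I |Ta|^p \, d\mu \leq \mu(I) \cdot C^p \mu(I)^{-1} = C^p.
\end{equation*}
Combining the two pieces gives $\int_{G_m} |Ta|^p \, d\mu \leq c_p + C^p =: C_p$, uniformly in $a$.

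Putting these together, $\|Tf\|_p^p \leq C_p \sum_k |\mu_k|^p$, and taking the infimum over all atomic decompositions yields $\|Tf\|_p \leq c_p \|f\|_{H_p}$ via the quasi-norm equivalence in Lemma \ref{lemma1}. The only genuinely delicate step is the interchange of $T$ with the infinite sum and the splitting of the integral; the $\sigma$-linearity hypothesis is tailor-made to legitimize the former, and the $p$-subadditivity makes the latter routine, so I do not expect real obstacles beyond verifying that the partial sums $\sum_{k \leq N} \mu_k a_k$ converge to $f$ in a sense compatible with applying $T$ termwise, which is standard for martingales in $H_p$.
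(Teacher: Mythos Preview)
The paper does not prove Lemma~\ref{lemma2} at all; it merely cites it from Weisz~\cite{We3}. Your argument is exactly the standard one found in that reference: atomic decomposition via Lemma~\ref{lemma1}, $p$-subadditivity to reduce to a uniform bound on atoms, and then the split into $I$ and $\overline{I}$ handled respectively by the $L_\infty\to L_\infty$ hypothesis and the assumed off-support estimate. So there is nothing to compare against in this paper, and your proof is correct in substance.

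One small point of phrasing: in this setting ``$\sigma$-linear'' (more precisely, $\sigma$-sublinear in Weisz's terminology) does not give the equality $Tf=\sum_k\mu_kTa_k$ that you wrote, since the operator of interest here is a maximal operator $\widetilde{\sigma}^{\ast,\nabla}f=\sup_k|\sigma_{n_k}f|$, which is only sublinear. What one actually has is the pointwise inequality $|Tf|\le\sum_k|\mu_k|\,|Ta_k|$, and this is all you need for the subsequent step $\int|Tf|^p\,d\mu\le\sum_k|\mu_k|^p\int|Ta_k|^p\,d\mu$. Adjusting that one line, the proof is complete.
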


\begin{lemma}[see \protect\cite{gat}]
\label{lemma3} Let $n>t,$ $t,n\in \mathbb{N},$ $x\in I_{t}\backslash $ $%
I_{t+1}$. Then 
\begin{equation*}
K_{M_{n}}\left( x\right) =\left\{ 
\begin{array}{ll}
0, & \text{if }x-x_{t}e_{t}\notin I_{n}, \\ 
\frac{M_{t}}{1-r_{t}\left( x\right) }, & \text{if }x-x_{t}e_{t}\in I_{n}.%
\end{array}%
\right.
\end{equation*}
\end{lemma}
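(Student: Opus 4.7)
The plan is to derive a one-step recurrence for $K_{M_n}$ in $n$ and then iterate it downward to the base level $n=t+1$, where the nontrivial closed form emerges.

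To set up the recurrence, I split $M_nK_{M_n}=\sum_{k=0}^{M_n-1}D_k$ into the initial block $0\le k<M_{n-1}$ together with the $m_{n-1}-1$ cosets $k=sM_{n-1}+l$ with $1\le s\le m_{n-1}-1$ and $0\le l<M_{n-1}$. Multiplicativity of the Vilenkin characters on disjoint digit supports gives $D_{sM_{n-1}+l}=D_{sM_{n-1}}+\psi_{sM_{n-1}}D_l$, so summing over $l$ and using $\psi_{sM_{n-1}}(x)=r_{n-1}^{s}(x)$ leads to
\begin{equation*}
M_nK_{M_n}(x)=M_{n-1}K_{M_{n-1}}(x)\sum_{s=0}^{m_{n-1}-1}r_{n-1}^{s}(x)+M_{n-1}\sum_{s=1}^{m_{n-1}-1}D_{sM_{n-1}}(x).
\end{equation*}

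Next I evaluate the base case $n=t+1$. Since $x\in I_t$, (\ref{3}) gives $D_{M_t}(x)=M_t$; and because $r_t(x)$ is a nontrivial $m_t$-th root of unity, the first summand vanishes while (\ref{9dn}) combined with a short geometric-series evaluation turns the second summand into $M_t\cdot m_tM_t/(1-r_t(x))$. Dividing by $M_{t+1}=m_tM_t$ yields $K_{M_{t+1}}(x)=M_t/(1-r_t(x))$, precisely the asserted value.

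For $n>t+1$ the hypothesis $x\notin I_{n-1}$ combined with (\ref{3}) and (\ref{9dn}) kills the second summand outright, so the recurrence collapses to
\begin{equation*}
K_{M_n}(x)=K_{M_{n-1}}(x)\cdot\frac{1}{m_{n-1}}\sum_{s=0}^{m_{n-1}-1}r_{n-1}^{s}(x),
\end{equation*}
and the trailing factor equals $1$ when $x_{n-1}=0$ and $0$ otherwise. Iterating from level $n$ down to level $t+1$, the running product of these factors is $1$ exactly when $x_{t+1}=\cdots=x_{n-1}=0$, i.e.\ when $x-x_te_t\in I_n$, yielding $K_{M_n}(x)=M_t/(1-r_t(x))$; in every other case at least one factor vanishes and the kernel is $0$. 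The only delicate step is establishing the recurrence (the character multiplicativity on disjoint digit supports must be invoked carefully); after that, the lemma reduces to two finite geometric-series computations and a single telescoping iteration.
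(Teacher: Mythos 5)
Your proof is correct, but there is nothing in the paper to compare it against: Lemma \ref{lemma3} is stated with the attribution ``see \cite{gat}'' and the paper gives no proof of it, importing it from G\'at's work as known. So your argument is a self-contained replacement for that citation, and it is sound at every step. Your recurrence
\begin{equation*}
M_nK_{M_n}=M_{n-1}K_{M_{n-1}}\sum_{s=0}^{m_{n-1}-1}r_{n-1}^{s}+M_{n-1}\sum_{s=1}^{m_{n-1}-1}D_{sM_{n-1}}
\end{equation*}
is exactly the first identity of the paper's Lemma \ref{lemma7} pushed to the full block $s_n=m_n$ (the paper states it only for $1\leq s_n\leq m_n-1$), and your derivation uses the same ingredients the paper deploys there: the coset splitting of $\sum_k D_k$, the shift identity (\ref{8k}), formula (\ref{9dn}), and (\ref{3}). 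I verified the three computational points: in the base case $n=t+1$ the first summand dies because $\sum_{s=0}^{m_t-1}r_t^{s}(x)=0$ when $x_t\neq 0$, while $\sum_{s=1}^{m_t-1}\bigl(1-r_t^{s}(x)\bigr)/\bigl(1-r_t(x)\bigr)=m_t/\bigl(1-r_t(x)\bigr)$, giving $K_{M_{t+1}}(x)=M_t/\bigl(1-r_t(x)\bigr)$ after dividing by $M_{t+1}=m_tM_t$; for $n>t+1$ the terms $D_{sM_{n-1}}(x)$ vanish since $x\notin I_{t+1}\supseteq I_{n-1}$; and the normalized geometric factor $\frac{1}{m_u}\sum_{s=0}^{m_u-1}r_u^{s}(x)$ at each intermediate level $u$ is the indicator of $x_u=0$, so the telescoped product is the indicator of $x_{t+1}=\cdots=x_{n-1}=0$, which is precisely the condition $x-x_te_t\in I_n$ (note also that at $n=t+1$ this condition is vacuous, consistent with your base case producing only the nonzero value). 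What your route buys: it shows the cited lemma is not an external input at all, but a two-line downward iteration of the same kernel decomposition the authors prove anyway in Lemma \ref{lemma7}, making the paper's toolkit self-contained.
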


\begin{lemma}[see \protect\cite{tep3}]
\label{lemma5b}\ Let $x\in I_{N}^{i,j},$ $i=0,\dots ,N-1,$ $j=i+1,\dots ,N$.
Then%
\begin{equation*}
\int_{I_{N}}\left\vert K_{n}\left( x-t\right) \right\vert d\mu \left(
t\right) \leq \frac{cM_{i}M_{j}}{M_{N}^{2}}.
\end{equation*}
\end{lemma}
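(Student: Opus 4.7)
The plan is to reduce the estimate to the dyadic Fej\'er kernels $K_{M_s}$, for which Lemma \ref{lemma3} supplies an explicit formula. Writing $n = \sum_{s=0}^{\vert n \vert} n_s M_s$ and iterating the identity (\ref{9dn}), one decomposes $n K_n$ as a bounded-coefficient linear combination of Vilenkin characters multiplying terms of the two model shapes $M_s K_{M_s}$ and $D_{M_s}$ for $0 \le s \le \vert n \vert$. Because the Vilenkin system is bounded, all scalar coefficients that appear (such as $\sum_{u=0}^{n_s - 1} r_s^u$ and $\sum_{u=0}^{n_s - 1} u\,r_s^u$) are $O(1)$, so the problem reduces to estimating $\int_{I_N} \vert K_{M_s}(x-t)\vert \, d\mu(t)$ and $\int_{I_N} \vert D_{M_s}(x-t)\vert \, d\mu(t)$ uniformly in $s$, then summing with the weight $M_s/n$ and using $n \ge M_{\vert n\vert} \ge M_N$.

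Next, fix $t \in I_N$. Since $t$ alters only coordinates with index $\ge N$, we still have $x - t \in I_N^{i,j}$; in particular $(x-t)_i \ne 0$, $(x-t)_j \ne 0$ and all other coordinates below $N$ vanish. Identity (\ref{3}) immediately disposes of the $D_{M_s}$ contribution, since $D_{M_s}(x-t) = M_s \chi_{I_s}(x-t)$, which vanishes once $s > i$ because $x - t \notin I_{i+1}$. For the main term, Lemma \ref{lemma3} shows that $K_{M_s}(x-t)$ is supported on the set where $(x-t) - (x-t)_i e_i \in I_s$, and on that set equals $M_i/(1 - r_i(x-t))$; the denominator is uniformly bounded below because $(x-t)_i \ne 0$ and $m_i$ is bounded, yielding the pointwise bound $\vert K_{M_s}(x-t)\vert \le c M_i$ on its support.

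Finally, the crucial computation is the measure of this support set inside $I_N$. For the relevant indices $s$, the conditions ``$x-t \in I_N^{i,j}$'' and ``$(x-t)-(x-t)_i e_i \in I_s$'' together pin down the coordinates of $t$ at positions $i+1,\dots,s-1$ and interact with the marked position $j$, leaving a slice of measure of order $M_j/(M_s M_N)$. Plugging these ingredients into the integral gives a contribution of order $(M_s/n) \cdot cM_i \cdot M_j/(M_s M_N) \le cM_i M_j/M_N^2$ from each $s$, and summing a geometric number of surviving indices preserves this bound. The main obstacle is precisely this last geometric bookkeeping: intersecting G\'at's support condition with the $I_N^{i,j}$ constraint and tracking the dependence on the second marked index $j$ requires a careful case split on $s$ relative to $i$ and $j$ (in particular, the case $j=N$ where only the single coordinate $i$ is nonzero must be handled separately, since the bound then collapses to $cM_i/M_N$).
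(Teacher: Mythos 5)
Your strategy is the right one and is essentially the proof from \cite{tep3} that the paper imports: decompose $nK_n$ into bounded-coefficient combinations of $M_sK_{M_s}$ and $D_{M_s}$ (this is exactly the machinery behind Lemma \ref{lemma8}), kill the $D_{M_s}$ terms for $s>i$ via (\ref{3}), and control $K_{M_s}(x-t)$ via G\'at's formula (Lemma \ref{lemma3}), with the denominator $\left\vert 1-r_i(x-t)\right\vert$ bounded below because $m_i\leq\lambda$. All of that is correct. The genuine gap is in your ``crucial computation'' of the support measure, and it breaks your final summation. For $s\leq N$ the condition $(x-t)-(x-t)_ie_i\in I_s$ involves only coordinates of index $<s\leq N$, and these coordinates of $x-t$ coincide with those of $x$: nothing about $t$ is ``pinned down,'' since every $t\in I_N$ already has $t_{i+1}=\dots=t_{s-1}=0$. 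So the support inside $I_N$ has measure exactly $\mu(I_N)=1/M_N$ when $i<s\leq j$, is empty when $j<s\leq N$, and only for $s>N$ (which can occur only when $j=N$) is it a genuine slice, of measure $1/M_s$, because then $t_N,\dots,t_{s-1}$ must match the coordinates of $x$. Your interpolated figure $M_j/(M_sM_N)$ overestimates the true measure for $i<s\leq j$ by the factor $M_j/M_s$, and this is fatal as written: with it, each surviving index contributes $(M_s/n)\cdot cM_i\cdot M_j/(M_sM_N)=cM_iM_j/(nM_N)$, a quantity \emph{independent} of $s$, so summing over the surviving indices $i<s\leq j$ --- of which there are $j-i$, a number that is not ``geometric'' and can be as large as $N$ --- yields $c(j-i)M_iM_j/M_N^2$, not $cM_iM_j/M_N^2$.

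The repair uses the true measures. For $i<s\leq j$ the contribution is $(M_s/n)\,cM_i\,(1/M_N)$, and now the factor $M_s$ makes the sum over $s$ genuinely geometric, giving at most $cM_iM_j/(nM_N)\leq cM_iM_j/M_N^2$ since $n\geq M_N$. For $j=N$ and $s>N$ the contribution is $(M_s/n)\,cM_i\,(1/M_s)=cM_i/n$ per index, and the count of at most $\left\vert n\right\vert-N+1$ such indices is absorbed by $n\geq M_{\left\vert n\right\vert}\geq 2^{\left\vert n\right\vert -N}M_N$, giving $\leq cM_i/M_N=cM_iM_N/M_N^2$ as required. Two smaller omissions: you never bound the terms with $s\leq i$, where Lemma \ref{lemma3} is inapplicable --- there the trivial bounds $\left\vert K_{M_s}\right\vert\leq cM_s$ and $\left\vert D_{M_s}\right\vert\leq M_s$ give at most $c\sum_{s\leq i}M_s^2/(nM_N)\leq cM_i^2/M_N^2\leq cM_iM_j/M_N^2$; and your use of $n\geq M_{\left\vert n\right\vert}\geq M_N$ tacitly assumes $\left\vert n\right\vert\geq N$, i.e.\ $n\geq M_N$, a hypothesis absent from the statement as reproduced here (and indeed necessary: for $n$ small the estimate fails), though it holds in the paper's application where $n_k>M_N$.
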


We also need the next two new Lemmas of independent interest:

\begin{lemma}
\label{lemma7}Let $t,s_{n},n\in \mathbb{N},$ and $1\leq s_{n}\leq m_{n}-1$.
Then\bigskip 
\begin{equation*}
s_{n}M_{n}K_{s_{n}M_{n}}=\sum_{l=0}^{s_{n}-1}\left(
\sum_{i=0}^{l-1}r_{n}^{i}\right) M_{n}D_{M_{n}}+\left(
\sum_{l=0}^{s_{n}-1}r_{n}^{l}\right) M_{n}K_{M_{n}}
\end{equation*}%
and%
\begin{equation*}
\left\vert K_{s_{n}M_{n}}\left( x\right) \right\vert \geq \frac{M_{n}}{\sqrt{%
2}\pi s_{n}},\text{ for }x\in I_{n+1}\left( e_{n-1}+e_{n}\right) .
\end{equation*}%
Moreover, if $x\in I_{t}/I_{t+1},$ \ $x-x_{t}e_{t}\notin I_{n}$ and $n>t,$
then 
\begin{equation}
K_{s_{n}M_{n}}(x)=0.  \label{100kn}
\end{equation}
\end{lemma}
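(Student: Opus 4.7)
My plan is to prove the three assertions in the stated order, using the identity as the structural backbone from which the other two parts follow easily.

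For the identity I would start from the definition $s_n M_n K_{s_n M_n} = \sum_{k=0}^{s_n M_n - 1} D_k$ and write each index $k \in [0,s_n M_n)$ uniquely as $k = l M_n + j$ with $0 \le l \le s_n - 1$ and $0 \le j \le M_n - 1$. Because the only nonzero digit of $l M_n$ in the generalized number system is the $n$th one (equal to $l$), the multiplicativity of the Vilenkin system gives $\psi_{l M_n + \nu} = r_n^l \psi_\nu$ for $0 \le \nu < M_n$, hence $D_{l M_n + j} = D_{l M_n} + r_n^l D_j$. Using (\ref{9dn}) to write $D_{l M_n} = D_{M_n}\sum_{i=0}^{l-1} r_n^i$ and summing the inner index $j$ from $0$ to $M_n-1$ (which produces $M_n$ copies of $D_{l M_n}$ and a single factor of $M_n K_{M_n}$) rearranges the sum into precisely the stated form.

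For the lower bound I would specialize to $x \in I_{n+1}(e_{n-1}+e_n)$, where $x_{n-1}=x_n=1$ and $x_k=0$ for $k<n-1$. Then $x \in I_{n-1}\setminus I_n$, so $D_{M_n}(x)=0$ by (\ref{3}) and the first term in the identity drops out, leaving
\[s_n M_n K_{s_n M_n}(x) = M_n K_{M_n}(x)\sum_{l=0}^{s_n-1} r_n^l(x).\]
Lemma \ref{lemma3} applies with $t=n-1$: since $x - e_{n-1}$ has all first $n$ digits zero it lies in $I_n$, so $K_{M_n}(x) = M_{n-1}/(1 - r_{n-1}(x))$ with $r_{n-1}(x)=e^{2\pi i/m_{n-1}}$. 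Evaluating the geometric sum and taking absolute values gives
\[|K_{s_n M_n}(x)| = \frac{M_{n-1}}{s_n}\cdot\frac{|\sin(\pi s_n/m_n)|}{2\sin(\pi/m_{n-1})\sin(\pi/m_n)},\]
and the claimed bound follows from $\sin\theta \le \theta$ applied in the denominator together with $\sin\theta \ge 2\theta/\pi$ on $[0,\pi/2]$ applied in the numerator, which yields the universal constant $1/(\sqrt{2}\pi)$.

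The vanishing assertion is then essentially free from the identity: since $n>t$ we have $I_n\subseteq I_{t+1}$, so $x \notin I_{t+1}$ forces $x \notin I_n$ and hence $D_{M_n}(x)=0$ by (\ref{3}); the extra hypothesis $x - x_t e_t \notin I_n$ triggers the vanishing branch of Lemma \ref{lemma3}, giving $K_{M_n}(x)=0$. Both summands in the identity vanish at $x$, so $K_{s_n M_n}(x)=0$. The hardest point is matching the precise constant in the middle assertion: the passage $|K_{s_n M_n}(x)|$ to a quotient of sines is routine, but tuning the three sine estimates so that they combine uniformly in $n$ and in $1 \le s_n \le m_n - 1$ requires care, and the borderline regime $s_n = 1$ may need a direct verification rather than the generic $\sin\theta \ge 2\theta/\pi$ bound.
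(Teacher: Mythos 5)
Your treatment of the first and third assertions coincides with the paper's own proof: the same block decomposition $k=lM_{n}+j$, the same identity $D_{lM_{n}+j}=D_{lM_{n}}+r_{n}^{l}D_{j}$ combined with (\ref{9dn}), and the same derivation of (\ref{100kn}) by showing $D_{M_{n}}(x)=K_{M_{n}}(x)=0$ and feeding this into the identity. Your reduction of the middle inequality is also the paper's: on $I_{n+1}\left( e_{n-1}+e_{n}\right) $ the $D_{M_{n}}$-term drops out, Lemma \ref{lemma3} applies with $t=n-1$, and your exact evaluation
$\left\vert K_{s_{n}M_{n}}\left( x\right) \right\vert =\frac{M_{n-1}}{s_{n}}\cdot \frac{\sin \left( \pi s_{n}/m_{n}\right) }{2\sin \left( \pi /m_{n-1}\right) \sin \left( \pi /m_{n}\right) }$
is correct (indeed more careful than the paper's, which only bounds the geometric sum from below by $1$ via its (\ref{12k})).

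However, the final step --- tuning the three sine estimates to produce $1/(\sqrt{2}\pi )$ --- is a genuine gap, and your own closing suspicion is well founded. First, $\sin \theta \geq 2\theta /\pi $ applies to $\theta =\pi s_{n}/m_{n}$ only when $s_{n}\leq m_{n}/2$; for larger $s_{n}$ you must pass to $\sin \left( \pi \left( m_{n}-s_{n}\right) /m_{n}\right) \geq 2/m_{n}$, and your chain then yields at best $M_{n}/\left( \pi ^{2}s_{n}\right) $ --- and even in the favourable range it yields $M_{n}/\pi ^{2}$, where $1/\pi ^{2}<1/\left( \sqrt{2}\pi \right) $, so the claimed constant is never reached by these estimates. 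Second, no tuning can reach it, because the stated constant is itself wrong for general bounded Vilenkin groups: taking $s_{n}=1$ and $m_{n-1}=3$ gives $\left\vert K_{M_{n}}\left( x\right) \right\vert =M_{n-1}/\sqrt{3}<3M_{n-1}/\left( \sqrt{2}\pi \right) =M_{n}/\left( \sqrt{2}\pi \right) $. The discrepancy originates in a slip in the paper's own computation (\ref{11k}): $\left\vert 1-e^{2\pi i/m_{n-1}}\right\vert =2\sin \left( \pi /m_{n-1}\right) $, not $\sqrt{2}\sin \left( \pi /m_{n-1}\right) $. The robust route, which is what the paper in effect does, is to bound the geometric factor below by $1$ (valid for all $1\leq s_{n}\leq m_{n}-1$, since $\sin \left( \pi s_{n}/m_{n}\right) \geq \sin \left( \pi /m_{n}\right) $) and use $\sin \left( \pi /m_{n-1}\right) \leq \pi /m_{n-1}$, which gives $\left\vert K_{s_{n}M_{n}}\left( x\right) \right\vert \geq M_{n}/\left( 2\pi s_{n}\right) $; this weaker absolute constant is all that Lemma \ref{lemma8} and Theorem \ref{theorem0fejermax} actually require.
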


\begin{proof}
We can write that%
\begin{eqnarray}
s_{n}M_{n}K_{s_{n}M_{n}} &=&\sum_{l=0}^{s_{n}-1}\sum_{k=lM_{n}}^{\left(
l+1\right) M_{n}-1}D_{k}  \label{skn} \\
&=&\sum_{l=0}^{s_{n}-1}\sum_{k=lM_{n}}^{\left( l+1\right)
M_{n}-1}D_{k}=\sum_{l=0}^{s_{n}-1}\sum_{k=0}^{M_{n}-1}D_{k+lM_{n}}.  \notag
\end{eqnarray}

Let $0\leq k<M_{n}$. Since 
\begin{equation}
D_{j+lM_{n}}=D_{lM_{n}}+\psi _{M_{n}}^{l}D_{j}=D_{lM_{n}}+r_{n}^{l}D_{j},%
\text{ when }\,\,j<lM_{n}  \label{8k}
\end{equation}%
if we apply (\ref{9dn}) we obtain that\qquad\ 
\begin{eqnarray*}
D_{k+lM_{n}} &=&\sum_{m=0}^{lM_{n}-1}\psi
_{m}+\sum_{m=lM_{n}}^{lM_{n}+k-1}\psi _{m} \\
&=&D_{lM_{n}}+\sum_{m=0}^{k-1}\psi
_{m+lM_{n}}=D_{lM_{n}}+r_{n}^{l}\sum_{m=0}^{k-1}\psi _{m} \\
&=&\left( \sum_{s=0}^{l-1}r_{n}^{s}\right) D_{M_{n}}+r_{n}^{l}D_{k}.
\end{eqnarray*}

By applying (\ref{skn}) we get that 
\begin{eqnarray*}
&&s_{n}M_{n}K_{s_{n}M_{n}}=\sum_{l=0}^{s_{n}-1}%
\sum_{k=0}^{M_{n}-1}D_{k+lM_{n}} \\
&=&\sum_{l=0}^{s_{n}-1}\sum_{k=0}^{M_{n}-1}\left( \left(
\sum_{i=0}^{l-1}r_{n}^{i}\right) D_{M_{n}}+r_{n}^{l}D_{k}\right) \\
&=&\sum_{l=0}^{s_{n}-1}\left( \sum_{i=0}^{l-1}r_{n}^{i}\right)
M_{n}D_{M_{n}}+\sum_{l=0}^{s_{n}-1}r_{n}^{l}\sum_{k=0}^{M_{n}-1}D_{k} \\
&=&\sum_{l=0}^{s_{n}-1}\left( \sum_{i=0}^{l-1}r_{n}^{i}\right)
M_{n}D_{M_{n}}+\sum_{l=0}^{s_{n}-1}r_{n}^{l}M_{n}K_{M_{n}}.
\end{eqnarray*}

Let $x\in I_{n+1}\left( e_{n-1}+e_{n}\right) .$ By Lemma \ref{lemma3} we
have that%
\begin{equation}
\left\vert K_{M_{n}}\left( x\right) \right\vert =\frac{M_{n-1}}{\left\vert
1-r_{n-1}\left( x\right) \right\vert }=\frac{M_{n-1}}{\sqrt{2}\sin \pi
/m_{n-1}}.  \label{11k}
\end{equation}%
Moreover, since%
\begin{equation*}
\sum_{u=1}^{s_{n}-1}r_{n}^{u}\left( x\right) =\sum_{u=1}^{s_{n}-1}\cos
u+\sum_{u=1}^{s_{n}-1}\sin u
\end{equation*}%
\begin{equation*}
=\frac{\cos \pi s_{n}/m_{n}\sin \pi \left( s_{n}-1\right) /m_{n}}{\sin \pi
/m_{n}}i+\frac{\sin \pi s_{n}/m_{n}\sin \pi \left( s_{n}-1\right) /m_{n}}{%
\sin \pi /m_{n}},
\end{equation*}%
it follows that%
\begin{equation}
\left\vert \sum_{u=1}^{s_{n}-1}r_{n}^{u}\left( x\right) \right\vert =\frac{%
\sin \pi \left( s_{n}-1\right) /m_{n}}{\sin \pi /m_{n}}\geq 1.  \label{12k}
\end{equation}

By combining (\ref{3}), (\ref{12k}) and the first part of Lemma \ref{lemma7}
we immediately get that%
\begin{eqnarray*}
&&\left\vert s_{n}M_{n}K_{s_{n}M_{n}}\left( x\right) \right\vert =\left\vert
\left( \sum_{l=0}^{s_{n}-1}r_{n}^{l}\left( x\right) \right)
M_{n}K_{M_{n}}\left( x\right) \right\vert \\
&=&\frac{M_{n}M_{n-1}}{\sqrt{2}\sin \pi /m_{n-1}}\geq \frac{%
M_{n}M_{n-1}m_{n-1}}{\sqrt{2}\pi }\geq \frac{M_{n}^{2}}{\sqrt{2}\pi }.
\end{eqnarray*}

Now, let $t,s_{n},n\in \mathbb{N},\ n>t,\ x\in I_{t}\backslash I_{t+1}$. If $%
x-x_{t}e_{t}\notin I_{n}$, then, by combining (\ref{3}), (\ref{9dn}) and
Lemma \ref{lemma3}, we obtain that 
\begin{equation*}
D_{M_{n}}(x)=K_{M_{n}}(x)=0.
\end{equation*}%
By again using the first part of Lemma \ref{lemma7} we get that%
\begin{equation*}
K_{s_{n}M_{n}}(x)=0.
\end{equation*}

The proof is complete.\vspace{0pt}\qquad
\end{proof}

\begin{lemma}
\label{lemma8}Let $n\in \mathbb{N}.$ Then%
\begin{equation}
\left\vert K_{n}\left( x\right) \right\vert \leq \frac{c}{n}%
\sum_{l=\left\langle n\right\rangle }^{\left\vert n\right\vert
}M_{l}\left\vert K_{M_{l}}\right\vert \leq c\sum_{l=\left\langle
n\right\rangle }^{\left\vert n\right\vert }\left\vert K_{M_{l}}\right\vert
\label{10k}
\end{equation}%
and 
\begin{equation}
\left\vert nK_{n}\right\vert \geq \frac{M_{\left\langle n\right\rangle }^{2}%
}{\sqrt{2}\pi \lambda },\text{ \ \ \ }x\in I_{\left\langle n\right\rangle
+1}\left( e_{\left\langle n\right\rangle -1}+e_{\left\langle n\right\rangle
}\right) ,  \label{9k}
\end{equation}%
where $\lambda :=\sup m_{n}.$
\end{lemma}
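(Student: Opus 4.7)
The plan is to reduce the pointwise estimates for $K_n$ to those of the $K_{M_l}$'s by iterating the basic splitting at the top digit of $n$. With $\tau=|n|$ and $n^-:=n-n_\tau M_\tau<M_\tau$, formula (\ref{8k}) (applied with $j<n^-\le n_\tau M_\tau$) combined with $nK_n=\sum_{k=0}^{n-1}D_k$ yields
\begin{equation*}
nK_n = n_\tau M_\tau K_{n_\tau M_\tau} + n^- D_{n_\tau M_\tau} + r_\tau^{n_\tau}\, n^- K_{n^-}.
\end{equation*}
Applied iteratively to $n^-$, this peels off the nonzero digits of $n$ from the top and produces
\begin{equation*}
nK_n = \sum_{l=\langle n\rangle}^{|n|}\Psi_l\bigl(n_l M_l K_{n_l M_l}+\pi_l D_{n_l M_l}\bigr),
\end{equation*}
where each $\Psi_l$ is a product of Rademacher factors (so $|\Psi_l|\equiv 1$) and $\pi_l:=\sum_{j<l}n_j M_j<M_l$. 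This identity is the workhorse for both bounds.

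To establish (\ref{10k}) I would bound each summand pointwise by a constant multiple of $M_l|K_{M_l}|$. By Lemma \ref{lemma7}, $n_l M_l K_{n_l M_l}$ is a combination of $M_l K_{M_l}$ and $M_l D_{M_l}$ with coefficients of modulus at most $n_l^2\le\lambda^2$. A direct computation shows $K_{M_l}(x)=(M_l-1)/2$ for $x\in I_l$, while $D_{M_l}$ is supported on $I_l$ with $|D_{M_l}|=M_l$ there; consequently $M_l|D_{M_l}|\le 4M_l|K_{M_l}|$ pointwise. The term $\pi_l D_{n_l M_l}$ is handled analogously using (\ref{9dn}) and $\pi_l<M_l$. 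Summing over $l$ and dividing by $n$ yields the first inequality in (\ref{10k}); the second follows from $M_l\le M_{|n|}\le n$.

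For (\ref{9k}), set $\nu=\langle n\rangle$ and fix $x\in I_{\nu+1}(e_{\nu-1}+e_\nu)$. Since $x_{\nu-1}=1$, we have $x\in I_{\nu-1}\setminus I_\nu$, so $D_{M_k}(x)=0$ for all $k\ge\nu$ and thus by (\ref{9dn}) $D_{n_l M_l}(x)=0$ for every $l\ge\nu$. Moreover $x-x_{\nu-1}e_{\nu-1}\in I_\nu\setminus I_{\nu+1}$, so for every $l>\nu$ the hypothesis of (\ref{100kn}) in Lemma \ref{lemma7} is met, giving $K_{n_l M_l}(x)=0$. Hence every summand with $l>\nu$ vanishes and the identity collapses to $nK_n(x)=\Psi_\nu(x)\, n_\nu M_\nu K_{n_\nu M_\nu}(x)$; the second part of Lemma \ref{lemma7} then delivers $|nK_n(x)|\ge M_\nu^2/(\sqrt{2}\pi)$, which is even stronger than (\ref{9k}).

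The main obstacle lies in (\ref{10k}): after Lemma \ref{lemma7} converts each $K_{n_l M_l}$ into combinations of $K_{M_l}$ and $D_{M_l}$, the coefficient of $M_l D_{M_l}$ is naively of order $\lambda^2$, and one must exploit both the support of $D_{M_l}$ and the explicit value of $K_{M_l}$ on $I_l$ to absorb this contribution into $M_l|K_{M_l}|$ pointwise; this is where the constant acquires its dependence on $\lambda$.
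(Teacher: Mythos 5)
Your proposal is correct and takes essentially the same route as the paper: the identity you derive by iterating (\ref{8k}) is precisely the decomposition of $nK_{n}$ into the blocks $s_{k}M_{n_{k}}K_{s_{k}M_{n_{k}}}$ and $D_{s_{k}M_{n_{k}}}$ that the paper quotes from \cite{bt}, after which both arguments conclude by combining (\ref{3}) and (\ref{9dn}) with Lemmas \ref{lemma3} and \ref{lemma7}. Your write-up in fact supplies details the paper leaves implicit (the pointwise domination $M_{l}\left\vert D_{M_{l}}\right\vert \leq 4M_{l}\left\vert K_{M_{l}}\right\vert$, valid for $l\geq 1$, and the observation that the lower bound holds even without the factor $\lambda$), so nothing further is needed.
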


\begin{proof}
It is well-known that (see \cite{bt}) 
\begin{eqnarray*}
&&nK_{n}=\sum_{k=1}^{r}\left( \prod_{j=1}^{k-1}r_{n_{j}}^{s_{j}}\right)
s_{k}M_{n_{k}}K_{s_{k}M_{n_{k}}} \\
&&+\sum_{k=1}^{r-1}\left( \prod_{j=1}^{k-1}r_{n_{j}}^{s_{j}}\right)
n^{(k)}D_{s_{k}M_{n_{k}}}.
\end{eqnarray*}%
Hence the proof follows by just combining (\ref{3}) and (\ref{9dn}) with
Lemmas \ref{lemma3} and \ref{lemma7}.
\end{proof}

\section{Proof of Theorem 1}

\begin{proof}[ Proof of Theorem \protect\ref{theorem0fejermax}]
Since 
\begin{equation}
\sup_{n}\int_{G_{m}}\left\vert K_{n}\left( x\right) \right\vert d\mu \left(
x\right) \leq c<\infty  \label{4}
\end{equation}%
\vspace{0pt}we obtain that $\widetilde{\sigma }^{\ast ,\vartriangle }$ is
bounded from $L_{\infty }$ to $L_{\infty }.$ According to Lemma \ref{lemma2}
we find that the proof of Theorem \ref{theorem0fejermax} will be complete,
if we show that%
\begin{equation*}
\int_{\overline{I_{N}}}\left\vert \widetilde{\sigma }^{\ast ,\vartriangle
}a\left( x\right) \right\vert <c<\infty ,
\end{equation*}%
for every $p$-atom $a,$ with support$\ I$ and $\mu \left( I\right)
=M_{N}^{-1}.$ We may assume that $I=I_{N}.$ It is easy to see that $\sigma
_{n_{k}}\left( a\right) =0$ when $n_{k}\leq M_{N}.$ Therefore, we can
suppose that $n_{k}>M_{N}$.

Since $\left\Vert a\right\Vert _{\infty }\leq M_{N}^{1/p}$ we find that%
\begin{eqnarray}
&&\left\vert \sigma _{n_{k}}a\left( x\right) \right\vert \leq
\int_{I_{N}}\left\vert a\left( t\right) \right\vert \left\vert
K_{n_{k}}\left( x-t\right) \right\vert d\mu \left( t\right)  \label{400} \\
&\leq &\left\Vert a\right\Vert _{\infty }\int_{I_{N}}\left\vert
K_{n_{k}}\left( x-t\right) \right\vert d\mu \left( t\right) \leq
M_{N}^{1/p}\int_{I_{N}}\left\vert K_{n_{k}}\left( x-t\right) \right\vert
d\mu \left( t\right) .  \notag
\end{eqnarray}

Without lost the generality we may assume that $i<j$. Let $x\in I_{N}^{i,j}\ 
$and $j<\left\langle n_{k}\right\rangle .$ Then $x-t\in I_{N}^{i,j}$ for $%
t\in I_{N}\ \ $and according to Lemma \ref{lemma3}, we obtain that 
\begin{equation*}
\left\vert K_{M_{l}}\left( x-t\right) \right\vert =0,\text{ \ for all }%
\left\langle n_{k}\right\rangle \leq \text{ }l\leq \left\vert
n_{k}\right\vert .
\end{equation*}

By applying (\ref{400}) and (\ref{10k}) in Lemma \ref{lemma8}, we get that 
\begin{eqnarray}
\left\vert \sigma _{n_{k}}a\left( x\right) \right\vert  &\leq &M_{N}^{1/p}%
\overset{\left\vert n_{k}\right\vert }{\underset{l=\left\langle
n_{k}\right\rangle }{\sum }}\int_{I_{N}}\left\vert K_{M_{l}}\left(
x-t\right) \right\vert d\mu \left( t\right) =0,\text{ }  \label{401} \\
\text{for }x &\in &I_{N}^{i,j},\text{ \ }0\leq i<j<\left\langle
n_{k}\right\rangle \leq l\leq \left\vert n_{k}\right\vert .  \notag
\end{eqnarray}

Let $x\in I_{N}^{i,j},\,$where $\left\langle n_{k}\right\rangle \leq j\leq N.
$ Then, in the view of Lemma \ref{lemma5b}, we have that 
\begin{equation*}
\int_{I_{N}}\left\vert K_{n_{k}}\left( x-t\right) \right\vert d\mu \left(
t\right) \leq \frac{cM_{i}M_{j}}{M_{N}^{2}}.
\end{equation*}

By using again (\ref{400}) we find that 
\begin{equation}
\left\vert \sigma _{n_{k}}a\left( x\right) \right\vert \leq
c_{p}M_{N}^{1/p-2}M_{i}M_{j}.  \label{403}
\end{equation}

Since $n_{k}\geq M_{N}$ we obtain that $\left\vert n_{k}\right\vert \geq N$
and%
\begin{equation*}
\sup_{k}\left( N-\left\langle n_{k}\right\rangle \right) \leq \sup_{k}\left(
\left\vert n_{k}\right\vert -\left\langle n_{k}\right\rangle \right) \leq
\sup_{k}\rho \left( n_{k}\right) <c<\infty.
\end{equation*}%
Thus,%
\begin{equation}
\frac{M_{N}^{1-p}}{M_{\left\langle n_{k}\right\rangle }^{1-p}}\leq \frac{%
M_{\left\vert n_{k}\right\vert }^{1-p}}{M_{\left\langle n_{k}\right\rangle
}^{1-p}}\leq \lambda ^{\left( \left\vert n_{k}\right\vert -\left\langle
n_{k}\right\rangle \right) \left( 1-p\right) }=\lambda ^{\rho \left(
n_{k}\right) \left( 1-p\right) }<c<\infty ,  \label{399}
\end{equation}%
where \ \ $\lambda =\sup_{k}m_{k}.$

By combining (\ref{2}) and (\ref{400})-(\ref{399}) we get that 
\begin{eqnarray*}
&&\int_{\overline{I_{N}}}\left\vert \widetilde{\sigma }^{\ast ,\vartriangle
}a\right\vert ^{p}d\mu \\
&=&\overset{N-2}{\underset{i=0}{\sum }}\overset{N-1}{\underset{j=i+1}{\sum }}%
\sum\limits_{x_{s}=0,s\in
\{j+1,...,N-1\}}^{m_{s}-1}\int_{I_{N}^{i,j}}\left\vert \widetilde{\sigma }%
^{\ast ,\vartriangle }a\right\vert ^{p}d\mu +\overset{N-1}{\underset{i=0}{%
\sum }}\int_{I_{N}^{k,N}}\left\vert \widetilde{\sigma }^{\ast ,\vartriangle
}a\right\vert ^{p}d\mu \\
&\leq &\overset{\left\langle n_{k}\right\rangle -1}{\underset{i=0}{\sum }}%
\overset{N-1}{\underset{j=\left\langle n_{k}\right\rangle }{\sum }}%
\sum\limits_{x_{s}=0,s\in
\{j+1,...,N-1\}}^{m_{s}-1}\int_{I_{N}^{i,j}}\left\vert \widetilde{\sigma }%
^{\ast ,\vartriangle }a\right\vert ^{p}d\mu \\
&&+\overset{N-2}{\underset{i=\left\langle n_{k}\right\rangle }{\sum }}%
\overset{N-1}{\underset{j=i+1}{\sum }}\sum\limits_{x_{s}=0,s\in
\{j+1,...,N-1\}}^{m_{s}-1}\int_{I_{N}^{i,j}}\left\vert \widetilde{\sigma }%
^{\ast ,\vartriangle }a\right\vert ^{p}d\mu +\overset{N-1}{\underset{i=0}{%
\sum }}\int_{I_{N}^{i,N}}\left\vert \widetilde{\sigma }^{\ast ,\vartriangle
}a\right\vert ^{p}d\mu \\
&\leq &c_{p}M_{N}^{1-2p}\overset{\left\langle n_{k}\right\rangle }{\underset{%
i=0}{\sum }}M_{i}^{p}\overset{N-1}{\underset{j=\left\langle
n_{k}\right\rangle +1}{\sum }}\frac{1}{M_{j}^{1-p}}+M_{N}^{1-2p}\overset{N-2}%
{\underset{i=\left\langle n_{k}\right\rangle }{\sum }}M_{i}^{p}\overset{N-1}{%
\underset{j=i+1}{\sum }}\frac{1}{M_{j}^{1-p}}+c_{p}\overset{N-1}{\underset{%
i=0}{\sum }}\frac{M_{i}^{p}}{M_{N}^{p}} \\
&\leq &\frac{c_{p}M_{N}^{1-2p}}{M_{\left\langle n_{k}\right\rangle }^{1-2p}}%
+c_{p}\leq \frac{c_{p}M_{\left\vert n_{k}\right\vert }^{1-p}}{%
M_{\left\langle n_{k}\right\rangle }^{1-p}}+c_{p}\leq c_{p}\lambda ^{\left(
\left\vert n_{k}\right\vert -\left\langle n_{k}\right\rangle \right) \left(
1-p\right) }<\infty .
\end{eqnarray*}

The proof of the a) part is complete.

Now, we prove the b) part of Theorem \ref{theorem0fejermax}. \textbf{\ }Let $%
\left\{ n_{k}:k\geq 0\right\} $ be a sequence of positive numbers,
satisfying condition (\ref{10}). Then 
\begin{equation}
\sup_{k}\frac{M_{\left\vert n_{k}\right\vert }}{M_{\left\langle
n_{k}\right\rangle }}=\infty .  \label{12h}
\end{equation}

Under condition (\ref{12h}) there exists a sequence $\left\{ \alpha _{k}:%
\text{ }k\geq 0\right\} \subset \left\{ n_{k}:\text{ }k\geq 0\right\} $ such
that $\alpha _{0}\geq 3$ and 
\begin{equation}
\sum_{\eta =0}^{\infty }\frac{M_{\left\langle \alpha _{k}\right\rangle
}^{\left( 1-2p\right) /2}}{M_{\left\vert \alpha _{k}\right\vert }^{\left(
1-2p\right) /2}}<c<\infty .  \label{12hh}
\end{equation}

Let \qquad 
\begin{equation*}
f^{\left( n\right) }=\sum_{\left\{ k;\text{ }\left\vert \alpha
_{k}\right\vert <n\right\} }\lambda _{k}a_{k},
\end{equation*}%
where 
\begin{equation*}
\lambda _{k}=\frac{\lambda M_{\left\langle \alpha _{k}\right\rangle
}^{\left( 1/p-2\right) /2}}{M_{\left\vert \alpha _{k}\right\vert }^{\left(
1/p-2\right) /2}}
\end{equation*}%
and%
\begin{equation*}
a_{k}=\frac{M_{\left\vert \alpha _{k}\right\vert }^{1/p-1}}{\lambda }\left(
D_{M_{\left\vert \alpha _{k}\right\vert +1}}-D_{M_{\left\vert \alpha
_{k}\right\vert }}\right) .
\end{equation*}

B applying Lemma \ref{lemma1} we can conclude that $f\in H_{p}.$

It is easy to show that%
\begin{equation}
\widehat{f}(j)=\left\{ 
\begin{array}{l}
M_{\left\vert \alpha _{k}\right\vert }^{1/2p}M_{\left\langle \alpha
_{k}\right\rangle }^{\left( 1/p-2\right) /2},\,\,\text{ } \\ 
\text{if \thinspace \thinspace }j\in \left\{ M_{\left\vert \alpha
_{k}\right\vert },...,\text{ ~}M_{\left\vert \alpha _{k}\right\vert
+1}-1\right\} ,\text{ }k=0,1,2..., \\ 
0\text{ },\text{ \thinspace \qquad \thinspace\ \ \ \ \thinspace\ \ \ \ \ }
\\ 
\text{\ if \thinspace \thinspace \thinspace }j\notin
\bigcup\limits_{k=0}^{\infty }\left\{ M_{\left\vert \alpha _{k}\right\vert
},...,\text{ ~}M_{\left\vert \alpha _{k}\right\vert +1}-1\right\} .\text{ }%
\end{array}%
\right.  \label{6aacharp}
\end{equation}

Moreover,%
\begin{equation*}
\sigma _{_{\alpha _{k}}}f=\frac{1}{\alpha _{k}}\sum_{j=1}^{M_{\left\vert
\alpha _{k}\right\vert }}S_{j}f+\frac{1}{\alpha _{k}}\sum_{j=M_{\left\vert
\alpha _{k}\right\vert }+1}^{\alpha _{k}}S_{j}f:=I+II.
\end{equation*}%
Let $M_{\left\vert \alpha _{k}\right\vert }<j\leq \alpha _{k}.$ Then, by
applying (\ref{6aacharp}), we get that 
\begin{equation}
S_{j}f=S_{M_{\left\vert \alpha _{k}\right\vert }}f+M_{\left\vert \alpha
_{k}\right\vert }^{1/2p}M_{\left\langle \alpha _{k}\right\rangle }^{\left(
1/p-2\right) /2}\left( D_{_{j}}-D_{M_{\left\vert \alpha _{k}\right\vert
}}\right) .  \label{8aafn}
\end{equation}

By using (\ref{8aafn}) we can write $II$ as%
\begin{eqnarray*}
II &=&\frac{\alpha _{k}-M_{\left\vert \alpha _{k}\right\vert }}{\alpha _{k}}%
S_{M_{\left\vert \alpha _{k}\right\vert }}f+\frac{M_{\left\vert \alpha
_{k}\right\vert }^{1/2p}M_{\left\langle \alpha _{k}\right\rangle }^{\left(
1/p-2\right) /2}}{\alpha _{k}}\sum_{j=M_{\left\vert \alpha _{k}\right\vert
}}^{\alpha _{k}}\left( D_{_{j}}-D_{M_{\left\vert \alpha _{k}\right\vert
}}\right) \\
&:&=II_{1}+II_{2}.
\end{eqnarray*}

It is easy to show that%
\begin{equation*}
\left\Vert II_{1}\right\Vert _{weak-L_{p}}^{p}\leq \left( \frac{\alpha
_{k}-M_{\left\vert \alpha _{k}\right\vert }}{\alpha _{k}}\right)
^{p}\left\Vert S_{M_{\left\vert \alpha _{k}\right\vert }}f\right\Vert
_{weak-L_{p}}^{p}\leq c_{p}\left\Vert f\right\Vert _{H_{p}}^{p}<\infty .
\end{equation*}

By using part a) of Theorem \ref{theorem0fejermax} for the estimation of $I$
we have that 
\begin{equation*}
\left\Vert I\right\Vert _{weak-L_{p}}^{p}=\left( \frac{M_{\left\vert \alpha
_{k}\right\vert }}{\alpha _{k}}\right) ^{p}\left\Vert \sigma _{M_{\left\vert
\alpha _{k}\right\vert }}f\right\Vert _{weak-L_{p}}^{p}\leq c_{p}\left\Vert
f\right\Vert _{H_{p}}^{p}<\infty .
\end{equation*}

Let $x\in $ $I_{_{\left\langle \alpha _{k}\right\rangle +1}}^{\left\langle
\alpha _{k}\right\rangle -1,\left\langle \alpha _{k}\right\rangle }.$ Under
condition (\ref{10}) we can conclude that $\left\langle \alpha
_{k}\right\rangle \neq \left\vert \alpha _{k}\right\vert $ and $\left\langle
\alpha _{k}-M_{\left\vert \alpha _{k}\right\vert }\right\rangle
=\left\langle \alpha _{k}\right\rangle .$ If we apply equality (\ref{8k})
for $l=1$ and estimate (\ref{9k}) in Lemma \ref{lemma8} for $II_{2}$ we
obtain that 
\begin{eqnarray*}
\left\vert II_{2}\right\vert &=&\frac{M_{\left\vert \alpha _{k}\right\vert
}^{1/2p}M_{\left\langle \alpha _{k}\right\rangle }^{\left( 1/p-2\right) /2}}{%
\alpha _{k}}\left\vert \sum_{j=1}^{\alpha _{k}-M_{\left\vert \alpha
_{k}\right\vert }}\left( D_{j+M_{\left\vert \alpha _{k}\right\vert
}}-D_{M_{\left\vert \alpha _{k}\right\vert }}\right) \right\vert \\
&=&\frac{M_{\left\vert \alpha _{k}\right\vert }^{1/2p}M_{\left\langle \alpha
_{k}\right\rangle }^{\left( 1/p-2\right) /2}}{\alpha _{k}}\left\vert \psi
_{M_{\left\vert \alpha _{k}\right\vert }}\sum_{j=1}^{\alpha
_{k}-M_{\left\vert \alpha _{k}\right\vert }}D_{j}\right\vert \\
&\geq &c_{p}M_{\left\vert \alpha _{k}\right\vert }^{1/2p-1}M_{\left\langle
\alpha _{k}\right\rangle }^{\left( 1/p-2\right) /2}\left( \alpha
_{k}-M_{\left\vert \alpha _{k}\right\vert }\right) \left\vert K_{\alpha
_{k}-M_{\left\vert \alpha _{k}\right\vert }}\right\vert \\
&\geq &c_{p}M_{\left\vert \alpha _{k}\right\vert }^{1/2p-1}M_{\left\langle
\alpha _{k}\right\rangle }^{\left( 1/p+2\right) /2}.
\end{eqnarray*}

It follows that%
\begin{eqnarray*}
&&\left\Vert II_{2}\right\Vert _{weak-L_{p}}^{p} \\
&\geq &c_{p}\left( M_{\left\vert \alpha _{k}\right\vert }^{\left(
1/p-2\right) /2}M_{\left\langle \alpha _{k}\right\rangle }^{\left(
1/p+2\right) /2}\right) ^{p}\mu \left\{ x\in G_{m}:\text{ }\left\vert
IV_{2}\right\vert \geq c_{p}M_{\left\vert \alpha _{k}\right\vert }^{\left(
1/p-2\right) /2}M_{\left\langle \alpha _{k}\right\rangle }^{\left(
1/p+2\right) /2}\right\} \\
&\geq &c_{p}M_{\left\vert \alpha _{k}\right\vert }^{1/2-p}M_{\left\langle
\alpha _{k}\right\rangle }^{1/2+p}\mu \left\{ I_{_{\left\langle \alpha
_{k}\right\rangle +1}}^{\left\langle \alpha _{k}\right\rangle
-1,\left\langle \alpha _{k}\right\rangle }\right\} \geq \frac{%
c_{p}M_{\left\vert \alpha _{k}\right\vert }^{1/2-p}}{M_{\left\langle \alpha
_{k}\right\rangle }^{1/2-p}}.
\end{eqnarray*}%
Hence, for large $k$, 
\begin{eqnarray*}
&&\left\Vert \sigma _{\alpha _{k}}f\right\Vert _{weak-L_{p}}^{p} \\
&\geq &\left\Vert II_{2}\right\Vert _{weak-L_{p}}^{p}-\left\Vert
II_{1}\right\Vert _{weak-L_{p}}^{p}-\left\Vert I\right\Vert _{weak-L_{p}}^{p}
\\
&\geq &\frac{1}{2}\left\Vert II_{2}\right\Vert _{weak-L_{p}}^{p}\geq \frac{%
c_{p}M_{\left\vert \alpha _{k}\right\vert }^{1/2-p}}{2M_{\left\langle \alpha
_{k}\right\rangle }^{1/2-p}}\rightarrow \infty ,\text{ as }k\rightarrow
\infty .
\end{eqnarray*}

The proof is complete.
\end{proof}

\end{document}